\newtheorem{theorem}{Theorem}[section]
\newtheorem{definition}[theorem]{Definition}
\newtheorem{lemma}[theorem]{Lemma}
\newtheorem{proposition}[theorem]{Proposition}
\newtheorem{remark}[theorem]{Remark}
\newenvironment{proof}[1][Proof]{\noindent\textbf{#1.} }{\ \rule{0.5em}{0.5em}}
\numberwithin{equation}{section}
\begin{document}
\begin{frontmatter}
\title{Two laws of large numbers for sublinear expectations}
\author{Sun Chuanfeng\footnote{{This research is supported by NNSF project (11171187) of
China}.}}
\address{School of Mathematical Sciences, University of Jinan, Jinan, Shandong, 250022, P.R. China.}
\ead{sms\_suncf@ujn.edu.cn.}
\date{}

\date{}
\begin{abstract}
In this paper, we consider the sublinear expectation on bounded random variables. With the notion of uncorrelatedness for random variables under the sublinear expectation, a weak law of large numbers is obtained. With the notion of independence for random variable sequences and regular property for sublinear expectations, we get a strong one.
\end{abstract}

\begin{keyword} Independence; Law of large numbers; Mazur-Orlicz theorem; Sublinear expectation.
\end{keyword}
\end{frontmatter}

\section{Introduction}

In the last decades, the study of nonlinear laws
of large numbers (LLN) has been motivated
by its importance in decision theory,  mathematical finance and quantum mechanics. There are many papers related to LLNs for Choquet integrals. For examples, given a sequence $\{\xi_{i}\}_{i\geq 1}$ of IID random variables for a totally monotone capacity $\upsilon$, Maccheroni and Marinacci \cite{MM} obtain any cluster point of empirical averages lies between the lower Choquet integral $\int\xi_{1}d\upsilon$ and the upper Choquet integral $-\int-\xi_{1}d\upsilon$. That is
\begin{equation}\label{equation}
\upsilon(\{\omega\in\Omega; \int\xi_{1}d\upsilon\leq\mathop{\lim\inf}_{n}\frac{\sum_{i=1}^{n}\xi_{i}}{n}\leq\mathop{\lim\sup}_{n}\frac{\sum_{i=1}^{n}\xi_{i}}{n} \leq-\int-\xi_{1}d\upsilon\})=1.
\end{equation}
Epstein and Schneider \cite{ES} get the same result with the rectangular and 2-monotone properties for capacities. Besides Choquet integrals, the study of LLNs related to other nonlinear expectations (risk measures) has also been widely developed. For examples, the results similar as (1.1) are obtained by Cooman and Miranda \cite{CM} for coherent lower previsions and Chen, Wu and Li \cite{CWL} for sublinear expectations.  Though all these results are sharp and remarkable,  every cluster point lies in an interval is still far from the empirical average itself converges which is crucial for applications in economics and statistics.

Since all nonlinear expectations in above papers are either sublinear or superlinear, without loss of generality, we investigate LLNs for sublinear expectations\footnote{If $\Phi$ is a superlinear expectation , we can turn it into a sublinear one by defining $\mathcal{E}(\xi):=-\Phi(-\xi)$.}. Many improvements have been done and the most important one is we obtain there exists a constant sequence $\{\lambda_{i}\}_{i\geq 1}$ satisfying $\frac{1}{n^{p}}\sum_{i=1}^{n}(\xi_{i}-\lambda_{i})\rightarrow 0$, where we consider $p=1$ in the weak form and $p>1$ in the strong form.

For details, since every sublinear expectation $\mathcal{E}$ can be represented as the supremum of its dominated linear expectations $\{E_{\mu}; \mu\in\mathcal{M}\}$, we say $\xi_{1}$ and $\xi_{2}$ are uncorrelated with respect to $\mathcal{E}$ if they are uncorrelated under each dominated linear expectation $E_{\mu}$. For a pairwise uncorrelated sequence $\{\xi_{n}\}_{n\geq1}$, there exist constants $\{\lambda_{i}\}_{i\geq 1}$ such that for any $\epsilon>0$,
\[
\lim_{n\rightarrow\infty}C(|\frac{1}{n}\sum_{i=1}^{n}(\xi_{i}-\lambda_{i})|>\epsilon)=0,
\]
where $C$ is the nonlinear capacity derived from $\mathcal{E}$. Different from the classical result where $\lambda_{i}=E[\xi_{i}]$ is fixed,
here $\lambda_{i}$ is chosen from $[-\mathcal{E}(-\xi_{i}), \mathcal{E}(\xi_{i})]$ and the choice depends on the selected $\lambda_{1}, \cdots, \lambda_{i-1}$.

In order to get the strong form, the continuous assumption called regular property is appended to $\mathcal{E}$. Under the regular assumption for $\mathcal{E}$ and the independent assumption for $\{\xi_{i}\}_{i\geq 1}$, we prove for any $p>1$, there exist constants $\{\lambda_{i}\}_{i\geq 1}$ such that
\[
\frac{1}{n^{p}}\sum_{i=1}^{n}(\xi_{i}-\lambda_{i})\rightarrow 0,\quad C-q.s..
\]

The paper is divided into two parts. In section 2, we state basic notions and give out a weak LLN for pairwise uncorrelated sequences. In section 3, by showing the sum of an independent sequence converges $C$-q.s. is equivalent to converges in capacity, we induce the strong one.

\section{Basic notions and the weak LLN for sublinear expectations}

Through the paper, we denote $\phi$ as the empty set, $\mathbb{N}$ as natural numbers and $\mathbb{R}$ as real numbers. For a measurable space $(\Omega, \mathcal{F})$, we use $\mathcal{X}$ to denote the set of all bounded $\mathcal{F}$-measurable functions. If we endow $\mathcal{X}$ with the supremum norm, then it is a Banach space. The dual space of $\mathcal{X}$ is written as $\mathcal{X}^{\ast}$. Since there is a one-to-one correspondence between the element in $\mathcal{X}^{\ast}$ and the finitely additive set function, for the sake of convenience, we will not discriminate the element in $\mathcal{X}^{\ast}$ from its associated additive set function.
For a $\xi\in\mathcal{X}$, $\sigma(\xi)$ will denote the $\sigma$-field generated by $\xi$.
\begin{definition}\label{def-sublinear}
We say a functional $\mathcal{E}: \mathcal{X}\rightarrow\mathbb{R}$ is a sublinear expectation if it satisfies the following properties:

(i) Monotonicity: If $\xi_{1}\geq \xi_{2}$, $\mathcal{E}(\xi_{1})\geq\mathcal{E}(\xi_{2})$.

(ii) Constant preserving:  For any $\lambda\in \mathbb{R}$, $\mathcal{E}(\lambda)=\lambda$.

(iii) Subadditivity: For any $\xi_{1}, \xi_{2}\in\mathcal{X}$, $\mathcal{E}(\xi_{1}+\xi_{2})\leq \mathcal{E}(\xi_{1})+\mathcal{E}(\xi_{2})$.

(iv) Positive homogeneity: For any $\lambda\geq 0$ and $\xi\in\mathcal{X}$, $\mathcal{E}(\lambda \xi)=\lambda \mathcal{E}(\xi)$.
\end{definition}

\begin{theorem}
\label{theA.1} For a sublinear expectation $\mathcal{E}$, there exists a set $\mathcal{M}$ such that
\[
\mathcal{E}(\xi)=\max_{\mu\in\mathcal{M}}E_{\mu}[\xi],\quad \forall\xi\in\mathcal{X},
\]
where $\mu$ is a finitely additive set function.
\end{theorem}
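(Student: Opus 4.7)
The plan is to prove this representation via the Hahn--Banach/Mazur--Orlicz extension principle, which is the natural tool for turning a sublinear functional into a supremum of dominated linear ones. The first step is, for each fixed $\xi_{0}\in\mathcal{X}$, to construct a linear functional $L_{\xi_{0}}:\mathcal{X}\to\mathbb{R}$ satisfying $L_{\xi_{0}}\leq\mathcal{E}$ pointwise on $\mathcal{X}$ and $L_{\xi_{0}}(\xi_{0})=\mathcal{E}(\xi_{0})$. I would begin on the one-dimensional subspace $\mathbb{R}\xi_{0}$ with $L_{\xi_{0}}(t\xi_{0}):=t\mathcal{E}(\xi_{0})$; domination there reduces, via positive homogeneity for $t\geq 0$ and via $0=\mathcal{E}(0)\leq\mathcal{E}(t\xi_{0})+\mathcal{E}(-t\xi_{0})$ (subadditivity) for $t<0$, to the sublinearity axioms (iii)--(iv). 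A standard Hahn--Banach extension then produces $L_{\xi_{0}}$ on all of $\mathcal{X}$ while preserving $L_{\xi_{0}}\leq\mathcal{E}$.

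The second step is to recognise each $L_{\xi_{0}}$ as a finitely additive probability. The constant-preserving axiom forces $L_{\xi_{0}}(1)=1$: on the one hand $L_{\xi_{0}}(1)\leq\mathcal{E}(1)=1$, and on the other $-L_{\xi_{0}}(1)=L_{\xi_{0}}(-1)\leq\mathcal{E}(-1)=-1$. Monotonicity delivers positivity: if $\xi\geq 0$, then $\mathcal{E}(-\xi)\leq\mathcal{E}(0)=0$, so $-L_{\xi_{0}}(\xi)=L_{\xi_{0}}(-\xi)\leq 0$. Under the bijection between $\mathcal{X}^{\ast}$ and finitely additive set functions announced right before the theorem, $L_{\xi_{0}}$ is represented by some $\mu_{\xi_{0}}$ with $L_{\xi_{0}}(\xi)=E_{\mu_{\xi_{0}}}[\xi]$ for every $\xi\in\mathcal{X}$.

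The third step is to set $\mathcal{M}:=\{\mu_{\xi_{0}}:\xi_{0}\in\mathcal{X}\}$. By construction every $\mu\in\mathcal{M}$ satisfies $E_{\mu}[\xi]\leq\mathcal{E}(\xi)$ for all $\xi$, and the specific choice $\mu=\mu_{\xi}$ realises the equality $E_{\mu_{\xi}}[\xi]=\mathcal{E}(\xi)$, simultaneously giving the representation and ensuring that the supremum is attained as a maximum. Equivalently one could take $\mathcal{M}$ to be the full convex (and weak-$\ast$ compact) set of finitely additive probabilities dominated by $\mathcal{E}$; the Hahn--Banach step shows this set is large enough to achieve $\mathcal{E}(\xi)$ at every $\xi$.

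The main obstacle is the Hahn--Banach/Mazur--Orlicz step, and within it the verification of domination on the one-dimensional subspace for negative scalars, where positive homogeneity alone fails and one must combine subadditivity with the value $\mathcal{E}(0)=0$. After that, the proof is essentially bookkeeping: translating the monotonicity and constant-preserving axioms of $\mathcal{E}$ into the positivity and unit-mass of the representing set functions, and invoking the stated identification $\mathcal{X}^{\ast}\leftrightarrow$ finitely additive set functions to conclude.
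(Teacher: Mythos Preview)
Your proposal is correct and follows essentially the same Hahn--Banach/Mazur--Orlicz route as the paper: the paper simply cites Corollary~2.4 of Chapter~I in Simons to obtain, for each $\xi$, a dominated linear expectation achieving $\mathcal{E}(\xi)$, and then takes $\mathcal{M}$ to be all dominated linear expectations. You spell out the one-dimensional start, the extension, and the verification that the resulting functional is a finitely additive probability, which the paper leaves implicit in its citation; otherwise the arguments coincide.
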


\begin{proof}
We take $\{E_{\mu}; \mu\in\mathcal{M}\}$ as the set of linear expectations dominated by $\mathcal{E}$, i.e. $E_{\mu}[\xi]\leq\mathcal{E}(\xi)$ for any $\xi\in\mathcal{X}$. By the Corollary 2.4 of Chapter I in \cite{Simons}, for any $\xi\in\mathcal{X}$,
there exists a linear expectation $E_{\mu}$ such that $E_{\mu}[\xi]=\mathcal{E}(\xi)$. Then
\[
\mathcal{E}(\xi)=\max_{\mu\in\mathcal{M}}E_{\mu}[\xi],\quad \forall\xi\in\mathcal{X}.
\]
\end{proof}

Let us denote
\[
C(A):=\mathcal{E}(I_{A})=\max_{\mu\in\mathcal{M}}\mu(A)\quad \textrm{and}\quad c(A):=-\mathcal{E}(-I_{A})=\min_{\mu\in\mathcal{M}}\mu(A).
\]

\begin{definition}\label{def-q.s.}
The set $A$ is a $C$-polar set if $C(A)=0$ and we say $\xi_{n}\rightarrow \xi$, $C$-q.s. if $\xi_{n}\rightarrow\xi$ pointwisely outside a $C$-polar set.
\end{definition}

\begin{definition}\label{def-convergence-capacity}
We say $\xi_{n}\rightarrow \xi$ in capacity if for any $\epsilon, \delta>0$, there exists an $N\in\mathbb{N}$ such that $C(|\xi_{n}-\xi|>\epsilon)<\delta$ for any $n\geq N$.
\end{definition}
\begin{lemma}[Markov's inequality]\label{Markov}
If $\mathcal{E}$ is a sublinear expectation, for any $r>0$ and $\xi\in\mathcal{X}$ such that $\mathcal{E}(|\xi|^{p})<\infty$($p\geq 1$), we have
$C(|\xi|>r)\leq \frac{\mathcal{E}(|\xi|^{p})}{r^{p}}$.
\end{lemma}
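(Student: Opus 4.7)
The plan is to mimic the classical Markov inequality proof, replacing integration with respect to a measure by the action of the sublinear expectation $\mathcal{E}$, and using only the four axioms in Definition~\ref{def-sublinear}. The key observation is the pointwise indicator bound
\[
I_{\{|\xi|>r\}}(\omega)\le \frac{|\xi(\omega)|^{p}}{r^{p}},\qquad \omega\in\Omega,
\]
which holds because on $\{|\xi|>r\}$ the right-hand side exceeds $1$, while on the complement both sides give a valid inequality since the left-hand side is $0$ and the right-hand side is nonnegative. Note that for this one needs $p\geq 1$ only to ensure that $|\xi|^{p}\in\mathcal{X}$ is well defined and that $\mathcal{E}(|\xi|^{p})$ makes sense; the pointwise bound itself works for any $p>0$.

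From this point the argument is essentially two lines. First I would apply the monotonicity axiom (i) of $\mathcal{E}$ to the pointwise inequality, and then invoke positive homogeneity (iv) with $\lambda = r^{-p}\ge 0$, obtaining
\[
C(|\xi|>r)=\mathcal{E}(I_{\{|\xi|>r\}})\;\le\; \mathcal{E}\!\left(\frac{|\xi|^{p}}{r^{p}}\right)\;=\;\frac{1}{r^{p}}\,\mathcal{E}(|\xi|^{p}).
\]
The first equality is the definition of $C$ displayed right after Theorem~\ref{theA.1}. The hypothesis $\mathcal{E}(|\xi|^{p})<\infty$ is only used to make the final quotient meaningful (in fact, for $\xi\in\mathcal{X}$ bounded this is automatic, but stating the hypothesis keeps the lemma in a form directly reusable in the sequel).

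There is no real obstacle here; the only point that deserves a line of justification is that $I_{\{|\xi|>r\}}\in\mathcal{X}$, which follows since $\xi$ is $\mathcal{F}$-measurable and bounded, so $\{|\xi|>r\}\in\mathcal{F}$ and its indicator is a bounded measurable function. Everything else is a direct consequence of the sublinear-expectation axioms, so the proof will consist of the displayed chain of (in)equalities preceded by the pointwise bound.
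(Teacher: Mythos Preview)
Your argument is correct, but it is not the route the paper takes. The paper proves the inequality by passing through the representation of Theorem~\ref{theA.1}: for each dominated linear expectation $E_{\mu}$ one has the classical chain
\[
E_{\mu}[I_{\{|\xi|>r\}}]\le E_{\mu}\Big[\frac{|\xi|^{p}}{r^{p}}I_{\{|\xi|>r\}}\Big]\le \frac{E_{\mu}[|\xi|^{p}]}{r^{p}}\le \frac{\mathcal{E}(|\xi|^{p})}{r^{p}},
\]
and then one takes the supremum over $\mu\in\mathcal{M}$ to recover $C(|\xi|>r)$ on the left. Your proof instead stays at the level of the sublinear functional and uses only the pointwise bound $I_{\{|\xi|>r\}}\le r^{-p}|\xi|^{p}$ together with monotonicity and positive homogeneity of $\mathcal{E}$. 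This is more elementary and logically cleaner, since it avoids any appeal to the representation theorem; the paper's approach, on the other hand, fits the recurring pattern in the article of reducing statements about $\mathcal{E}$ to statements about the dominated $E_{\mu}$. One small remark: your justification for the restriction $p\ge 1$ (that it is needed for $|\xi|^{p}\in\mathcal{X}$) is not quite the point, since for bounded measurable $\xi$ the function $|\xi|^{p}$ is bounded and measurable for every $p>0$; the hypothesis $p\ge 1$ in the lemma is simply what is used later and is not essential to either proof.
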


\begin{proof} For any $E_{\mu}$ dominated by $\mathcal{E}$, we have
\[
E_{\mu}[I_{\{|\xi|>r\}}]\leq E_{\mu}[\frac{|\xi|^{p}}{r^{p}}I_{\{|\xi|>r\}}]\leq \frac{E_{\mu}[|\xi|^{p}]}{r^{p}}.
\]
Then
\[
E_{\mu}[I_{\{|\xi|>r\}}]\leq\frac{\mathcal{E}(|\xi|^{p})}{r^{p}}\quad \textrm{and}\quad C(\{|\xi|>r\})\leq\frac{\mathcal{E}(|\xi|^{p})}{r^{p}}.
\]
\end{proof}
\begin{definition}\label{uncorrelated}
For $\xi, \eta\in\mathcal{X}$ and sublinear expectation $\mathcal{E}$, we say $\xi$ and $\eta$ are uncorrelated with respect to $\mathcal{E}$ if $E_{\mu}[\xi\eta]=E_{\mu}[\xi]E_{\mu}[\eta]$ for any $\mu\in\mathcal{M}$, where $\mathcal{M}$ is defined as in Theorem \ref{theA.1}.
\end{definition}

\begin{remark}\label{remark}
For any constants $\alpha$, $\beta$ and $\lambda$, if $\xi$ is uncorrelated both with $\eta_{1}$ and $\eta_{2}$ with respect to $\mathcal{E}$, then $\lambda\xi+\alpha$ and $\eta_{1}+\eta_{2}+\beta$ are uncorrelated with respect to $\mathcal{E}$.
\end{remark}
\begin{lemma}\label{main-aux-general}
If $\mathcal{E}$ is a sublinear expectation, $\xi$ and $\eta$ are uncorrelated with respect to $\mathcal{E}$, then there exists a constant $\hat{\lambda}\in\mathcal{D}$ such that
\[
\mathcal{E}[(\xi-\hat{\lambda})\eta]=0,
\]
where $\mathcal{D}:=[-\mathcal{E}(-\xi), \mathcal{E}(\xi)]$.
\end{lemma}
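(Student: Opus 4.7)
The strategy is to exploit the dual representation of Theorem \ref{theA.1}. Since $\xi$ and $\eta$ are uncorrelated with respect to every $E_\mu$, for each $\lambda\in\mathbb{R}$ the quantity in question can be rewritten as
\begin{equation*}
\mathcal{E}[(\xi-\lambda)\eta]\;=\;\max_{\mu\in\mathcal{M}}E_\mu[(\xi-\lambda)\eta]\;=\;\max_{\mu\in\mathcal{M}}\bigl(E_\mu[\xi]-\lambda\bigr)E_\mu[\eta],
\end{equation*}
so the task reduces to picking $\hat{\lambda}\in\mathcal{D}=[-\mathcal{E}(-\xi),\mathcal{E}(\xi)]$ that makes this maximum vanish.

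The crux is a structural claim: under the uncorrelatedness hypothesis, \emph{at least one of the maps $\mu\mapsto E_\mu[\xi]$ or $\mu\mapsto E_\mu[\eta]$ is constant on $\mathcal{M}$}. To prove it I would use that $\mathcal{M}$, being the set of all linear expectations dominated by $\mathcal{E}$, is convex, so $\mu_p:=p\mu_1+(1-p)\mu_2\in\mathcal{M}$ for every $\mu_1,\mu_2\in\mathcal{M}$ and $p\in(0,1)$. Substituting $\mu_p$ into the uncorrelatedness identity $E_{\mu_p}[\xi\eta]=E_{\mu_p}[\xi]E_{\mu_p}[\eta]$ and expanding both sides collapses to
\begin{equation*}
p(1-p)\bigl(E_{\mu_1}[\xi]-E_{\mu_2}[\xi]\bigr)\bigl(E_{\mu_1}[\eta]-E_{\mu_2}[\eta]\bigr)=0,
\end{equation*}
so any two members of $\mathcal{M}$ must coincide on $E_\cdot[\xi]$ or on $E_\cdot[\eta]$. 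A short contradiction argument then promotes this pairwise dichotomy to a uniform one: assuming both maps were non-constant, pick witnesses $\mu_1,\mu_2$ (distinct on $E_\cdot[\xi]$) and $\mu_3,\mu_4$ (distinct on $E_\cdot[\eta]$), and chase the four pairings $(\mu_i,\mu_j)$, $i\in\{1,2\}$, $j\in\{3,4\}$, to force $E_{\mu_3}[\eta]=E_{\mu_4}[\eta]$ either way.

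With the structural claim in hand the conclusion splits into two easy cases. If $E_\mu[\xi]\equiv c$, then $\mathcal{E}(\xi)=-\mathcal{E}(-\xi)=c$, so $\mathcal{D}=\{c\}$ and $\hat{\lambda}=c$ gives $E_\mu[(\xi-c)\eta]=(c-c)E_\mu[\eta]=0$ for every $\mu$, hence $\mathcal{E}[(\xi-c)\eta]=0$. If $E_\mu[\eta]\equiv d$, the displayed maximum reduces to $\max_\mu d\,(E_\mu[\xi]-\lambda)$; one picks $\hat{\lambda}=\mathcal{E}(\xi)\in\mathcal{D}$ when $d\geq 0$ and $\hat{\lambda}=-\mathcal{E}(-\xi)\in\mathcal{D}$ when $d<0$, and in either sub-case every term becomes non-positive while the bound is attained with equality at a maximizer of $E_\cdot[\xi]$ (respectively $-E_\cdot[\xi]$), using Theorem \ref{theA.1} to guarantee attainment. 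The principal obstacle is the pairwise-to-uniform upgrade in the structural claim; once that is done the remaining steps are direct computations from the dual representation.
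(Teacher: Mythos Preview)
Your argument is correct and proceeds along a genuinely different line from the paper's. The paper treats $f(\lambda):=\mathcal{E}[(\xi-\lambda)\eta]$ as a black box, shows $\inf_{\lambda\in\mathcal{D}}f(\lambda)\ge 0$ directly, and then invokes the Mazur--Orlicz theorem (Theorem~\ref{MO}) to produce a single dominated $\hat\mu$ with $\inf_{\lambda\in\mathcal{D}}E_{\hat\mu}[(\xi-\lambda)\eta]=\inf_{\lambda\in\mathcal{D}}f(\lambda)$, from which the reverse inequality drops out; attainment is then a continuity/compactness step. Your route instead exploits convexity of $\mathcal{M}$ to extract a rigidity statement---that Definition~\ref{uncorrelated} forces either $\mu\mapsto E_\mu[\xi]$ or $\mu\mapsto E_\mu[\eta]$ to be constant on $\mathcal{M}$---after which the result is a two-line case split with no minimax theorem needed. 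What your approach buys is a more elementary proof and a striking structural corollary (uncorrelatedness in this sense implies $\mathcal{E}(\xi)=-\mathcal{E}(-\xi)$ or $\mathcal{E}(\eta)=-\mathcal{E}(-\eta)$); what the paper's approach buys is robustness, since the Mazur--Orlicz argument does not depend on the particular algebraic identity $E_\mu[\xi\eta]=E_\mu[\xi]E_\mu[\eta]$ but only on the fact that each $\lambda\in\mathcal{D}$ annihilates the corresponding $E_{\mu'}$ term, so it would transfer more readily to variants of the hypothesis. Your pairwise-to-uniform upgrade is fine as sketched: from $b_1=b_2$, $a_3=a_4$ one gets $a_1=a_3=a_4$ (else $b_1=b_3=b_4$), and then pairing $\mu_2$ with $\mu_3,\mu_4$ forces $b_2=b_3=b_4$, a contradiction.
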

\begin{proof}
It is easy to check $\mathcal{D}=\{E_{\mu}[\xi]; \mu\in\mathcal{M}\}$. Define $f(\lambda):=\mathcal{E}[(\xi-\lambda)\eta]$ and consider the optimization problem
\[
\inf_{\lambda\in\mathcal{D}}f(\lambda).
\]

On one hand, for any $\lambda'\in\mathcal{D}$, there exists a $\mu'\in\mathcal{M}$ such that $\lambda'=E_{\mu'}[\xi]$. We have
\[
\mathcal{E}[(\xi-\lambda')\eta]\geq E_{\mu'}[(\xi-\lambda')\eta]=E_{\mu'}[\xi-\lambda']E_{\mu'}[\eta]=0.
\]
Then
\[
\inf_{\lambda\in\mathcal{D}}f(\lambda)=\inf_{\lambda\in\mathcal{D}}\mathcal{E}[(\xi-\lambda)\eta]\geq0.
\]
On the other hand, by Theorem \ref{MO}, there exists a $\hat{\mu}\in\mathcal{M}$ such that
\[
\inf_{\lambda\in\mathcal{D}}E_{\hat{\mu}}[(\xi-\lambda)\eta]=\inf_{\lambda\in\mathcal{D}}\mathcal{E}[(\xi-\lambda)\eta].
\]
Since
\[
\inf_{\lambda\in\mathcal{D}}E_{\hat{\mu}}[(\xi-\lambda)\eta]\leq E_{\hat{\mu}}[(\xi-E_{\hat{\mu}}[\xi])\eta]=E_{\hat{\mu}}[\xi-E_{\hat{\mu}}[\xi]]E_{\hat{\mu}}[\eta]=0.
\]
Then
\[
\inf_{\lambda\in\mathcal{D}}f(\lambda)=0.
\]

For any $\lambda_{1}, \lambda_{2}\in\mathbb{R}$, we have
\[
|f(\lambda_{1})-f(\lambda_{2})|=|\mathcal{E}[(\xi-\lambda_{1})\eta]-\mathcal{E}[(\xi-\lambda_{2})\eta]|\leq|\mathcal{E}(\eta)||\lambda_{1}-\lambda_{2}|.
\]
$f$ is a continuous function on $\mathbb{R}$ due to $\eta$ is a bounded random variable. With $\mathcal{D}$ is a closed set in $\mathbb{R}$, the infimum of $f$ can be obtained by some $\hat{\lambda}\in\mathcal{D}$.
\end{proof}

\begin{theorem}[The weak LLN]\label{general-WEAK LLN}
Suppose $\mathcal{E}$ is a sublinear expectation and $\{\xi_{n}\}_{n\geq 1}\subset\mathcal{X}$ is a pairwise uncorrelated sequence with respect to $\mathcal{E}$. If $\sup_{n\geq 1}(\mathcal{E}[\xi_{n}-\mathcal{E}(\xi_{n})]^{2}+[\mathcal{E}(\xi_{n})+\mathcal{E}(-\xi_{n})]^{2})<\infty$, there exist constants $\{\lambda_{i}\}_{i\geq 1}$ such that $\lambda_{i}\in[-\mathcal{E}(-\xi_{i}), \mathcal{E}(\xi_{i})]$ and for any $\epsilon>0$,
\[
\lim_{n\rightarrow\infty}C(|\frac{1}{n}\sum_{i=1}^{n}(\xi_{i}-\lambda_{i})|>\epsilon)=0.
\]
\end{theorem}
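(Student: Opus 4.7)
The plan is to reduce the statement to Markov's inequality (Lemma \ref{Markov}) applied to the squared partial sum $S_n := \sum_{i=1}^{n}(\xi_i-\lambda_i)$, and therefore to build the $\lambda_i$'s so that $\mathcal{E}[S_n^2]$ grows at most linearly in $n$. Once that estimate is in hand,
\[
C\!\left(\Big|\tfrac{1}{n}S_n\Big|>\epsilon\right)=C(|S_n|>n\epsilon)\leq \frac{\mathcal{E}[S_n^2]}{n^2\epsilon^{2}}
\]
will tend to $0$.

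First, I would choose $\lambda_n$ recursively. Pick any $\lambda_1\in[-\mathcal{E}(-\xi_1),\mathcal{E}(\xi_1)]$, and for $n\ge 2$, suppose $\lambda_1,\dots,\lambda_{n-1}$ are chosen. By pairwise uncorrelatedness, $\xi_n$ is uncorrelated with each $\xi_i$ for $i<n$; applying Remark \ref{remark} iteratively (with the ``$\lambda\xi+\alpha$'' clause absorbing the shifts by $-\lambda_i$ and the ``$\eta_1+\eta_2+\beta$'' clause building up the sum), $\xi_n$ is uncorrelated with $S_{n-1}=\sum_{i=1}^{n-1}(\xi_i-\lambda_i)$ with respect to $\mathcal{E}$. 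Lemma \ref{main-aux-general} then provides $\lambda_n\in[-\mathcal{E}(-\xi_n),\mathcal{E}(\xi_n)]$ with $\mathcal{E}[(\xi_n-\lambda_n)S_{n-1}]=0$.

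Second, I would prove by induction that $\mathcal{E}[S_n^2]\le\sum_{i=1}^{n}\mathcal{E}[(\xi_i-\lambda_i)^2]$. Writing $S_n=S_{n-1}+(\xi_n-\lambda_n)$, expanding the square, and using subadditivity and positive homogeneity gives
\[
\mathcal{E}[S_n^2]\le \mathcal{E}[S_{n-1}^2]+2\mathcal{E}[S_{n-1}(\xi_n-\lambda_n)]+\mathcal{E}[(\xi_n-\lambda_n)^2],
\]
and the middle term vanishes by construction. Each summand is then controlled by the hypothesis: from $\xi_i-\lambda_i=(\xi_i-\mathcal{E}(\xi_i))+(\mathcal{E}(\xi_i)-\lambda_i)$ and the elementary inequality $(a+b)^2\le 2a^2+2b^2$,
\[
\mathcal{E}[(\xi_i-\lambda_i)^2]\le 2\mathcal{E}[(\xi_i-\mathcal{E}(\xi_i))^2]+2(\mathcal{E}(\xi_i)+\mathcal{E}(-\xi_i))^2,
\]
where I used that the interval $[-\mathcal{E}(-\xi_i),\mathcal{E}(\xi_i)]$ has length $\mathcal{E}(\xi_i)+\mathcal{E}(-\xi_i)$. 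By assumption this is bounded by a constant $2M$ independent of $i$, so $\mathcal{E}[S_n^2]\le 2Mn$, and Markov closes the argument.

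The main obstacle I expect is the bookkeeping needed for the recursive selection of $\lambda_n$: one has to make sure, at each step, that Lemma \ref{main-aux-general} is being applied to a legitimately uncorrelated pair. This is why Remark \ref{remark} (stable under affine transformation on one side and sums on the other) is essential; without it, pairwise uncorrelatedness of $\{\xi_i\}$ would not upgrade to the ``$\xi_n$ versus the whole history $S_{n-1}$'' uncorrelatedness that the cross-term cancellation rests on. Everything else---subadditivity, the elementary squared-sum inequality, Markov---is routine.
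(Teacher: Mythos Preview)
Your proposal is correct and follows essentially the same route as the paper: recursively choose $\lambda_n$ via Remark~\ref{remark} and Lemma~\ref{main-aux-general} so that the cross term $\mathcal{E}[(\xi_n-\lambda_n)S_{n-1}]$ vanishes, then peel off one square at a time by subadditivity and finish with Markov. The only cosmetic differences are that the paper fixes $\lambda_1=\mathcal{E}(\xi_1)$ rather than an arbitrary point of the interval, and that the paper states the bound on $\mathcal{E}[(\xi_i-\lambda_i)^2]$ without the factor $2$ you obtain from $(a+b)^2\le 2a^2+2b^2$; your version is cleaner and the factor is irrelevant to the conclusion.
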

\begin{proof}
Take $\lambda_{1}=\mathcal{E}(\xi_{1})$. Since $\{\xi_{n}\}_{n\geq 1}\subset\mathcal{X}$ is pairwise uncorrelated with respect to $\mathcal{E}$, by Remark \ref{remark} and Lemma \ref{main-aux-general}, there exist constants $\{\lambda_{i}; i\geq 2\}$ such that $\lambda_{i}\in[-\mathcal{E}(-\xi_{i}), \mathcal{E}(\xi_{i})]$ and
\[
\mathcal{E}[(\xi_{i}-\lambda_{i})(\sum_{j=1}^{i-1}(\xi_{j}-\lambda_{j}))]=0, \quad\forall i\geq 2.
\]
By the Markov's inequality, we have
\[
\begin{array}{r@{}l}
& C(|\frac{1}{n}\sum_{i=1}^{n}(\xi_{i}-\lambda_{i})|>\epsilon)\leq\mathcal{E}(\frac{[\sum_{i=1}^{n}(\xi_{i}-\lambda_{i})]^{2}}{n^{2}\epsilon^{2}})\\
\leq  & \frac{1}{n^{2}\epsilon^{2}}[\mathcal{E}(\xi_{n}-\lambda_{n})^{2}+2\mathcal{E}[(\xi_{n}-\lambda_{n})(\sum_{i=1}^{n-1}(\xi_{i}-\lambda_{i}))]+\mathcal{E}(\sum_{i=1}^{n-1}(\xi_{i}-\lambda_{i}))^{2}]\\
\leq & \frac{1}{n^{2}\epsilon^{2}}\sum_{i=1}^{n}\mathcal{E}(\xi_{i}-\lambda_{i})^{2}.
\end{array}
\]
Due to $\lambda_{i}\in[-\mathcal{E}(-\xi_{i}), \mathcal{E}(\xi_{i})]$, then
\[
\mathcal{E}(\xi_{i}-\lambda_{i})^{2}\leq\mathcal{E}[\xi_{n}-\mathcal{E}(\xi_{n})]^{2}+[\mathcal{E}(\xi_{n})+\mathcal{E}(-\xi_{n})]^{2}.
\]
We have
\[
C(|\frac{1}{n}\sum_{i=1}^{n}(\xi_{i}-\lambda_{i})|>\epsilon)\leq \frac{1}{n\epsilon^{2}}\sup_{n\geq 1}(\mathcal{E}[\xi_{n}-\mathcal{E}(\xi_{n})]^{2}+[\mathcal{E}(\xi_{n})+\mathcal{E}(-\xi_{n})]^{2}).
\]
As $n\rightarrow\infty$, $C(|\frac{1}{n}\sum_{i=1}^{n}(\xi_{i}-\lambda_{i})|>\epsilon)\rightarrow 0$.
\end{proof}

\section{The strong LLN for sublinear expectations with regular property}

In this section, in order to get the strong LLN, on one hand, the regular property is appended to sublinear expectations. On the other hand, for the random variable sequence, uncorrelatedness is strengthened as independence.
\subsection{Regularity and independence}
\begin{definition}
We say a sublinear expectation $\mathcal{E}$ is regular if for any sequence $\{\xi_{n}\}_{n\geq1}\subset\mathcal{X}$ such that $\xi_{n}\downarrow0$, we have $\mathcal{E}(\xi_{n})\downarrow0$.
\end{definition}

\begin{lemma}
\label{lem-fatou-probility-property} If the sublinear expectation $\mathcal{E}$ is
regular, for any linear expectation $E_{\mu}$
dominated by $\mathcal{E}$, $\mu$ is a probability measure.
\end{lemma}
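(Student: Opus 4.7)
The plan is to reduce countable additivity of $\mu$ to continuity from above at the empty set, and then derive that continuity from the regularity of $\mathcal{E}$.

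First I would verify that $\mu$ is a non-negative finitely additive set function with $\mu(\Omega) = 1$. Finite additivity is built into membership in $\mathcal{X}^{\ast}$. Non-negativity follows because for $\xi \geq 0$ we have $-\xi \leq 0$, hence $E_{\mu}[-\xi] \leq \mathcal{E}(-\xi) \leq \mathcal{E}(0) = 0$, giving $E_{\mu}[\xi] \geq 0$; applied to $\xi = I_{A}$ this yields $\mu(A) \geq 0$. Normalization $\mu(\Omega)=1$ follows from $E_{\mu}[1] \leq \mathcal{E}(1) = 1$ and $E_{\mu}[-1] \leq \mathcal{E}(-1) = -1$, i.e. $E_{\mu}[1] = 1$.

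Next I would upgrade finite additivity to countable additivity. Since $\mu$ is a non-negative finitely additive set function with $\mu(\Omega) = 1$, it suffices to show continuity from above at $\emptyset$: whenever $\{A_{n}\}_{n \geq 1} \subset \mathcal{F}$ satisfies $A_{n} \downarrow \emptyset$, then $\mu(A_{n}) \downarrow 0$. This is a standard equivalence for finitely additive probabilities on a $\sigma$-algebra.

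To establish this continuity, fix $A_{n} \downarrow \emptyset$ and set $\xi_{n} := I_{A_{n}} \in \mathcal{X}$. Then $\xi_{n} \downarrow 0$ pointwise, so by regularity of $\mathcal{E}$ we have $\mathcal{E}(\xi_{n}) \downarrow 0$. Since $E_{\mu}$ is dominated by $\mathcal{E}$,
\[
0 \leq \mu(A_{n}) = E_{\mu}[I_{A_{n}}] \leq \mathcal{E}(I_{A_{n}}) \longrightarrow 0,
\]
which gives the required continuity at $\emptyset$ and completes the proof.

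The argument is essentially a one-line translation from the functional formulation of regularity to the set-theoretic formulation of countable additivity, so there is no real obstacle; the only subtle points are verifying that elements of $\mathcal{X}^{\ast}$ dominated by $\mathcal{E}$ are automatically non-negative and normalized, which I handle above.
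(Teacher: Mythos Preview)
Your proof is correct and follows essentially the same route as the paper: use regularity of $\mathcal{E}$ on $\xi_n = I_{A_n}$ with $A_n \downarrow \emptyset$ to force $\mu(A_n)\downarrow 0$, and check $\mu(\Omega)=1$. You simply spell out the non-negativity and the standard passage from continuity at $\emptyset$ to countable additivity more explicitly than the paper does.
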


\begin{proof}
For any $A_{n}\downarrow\phi$, we have $\mathcal{E}(I_{A_{n}})\downarrow0$. If a
linear expectation $E_{\mu}$ is dominated by $\mathcal{E}$, then $\mu(A_{n})\downarrow0$. It
is easy to see that $\mu(\Omega)=1$.
\end{proof}

In the following, if $\mathcal{E}$ is regular, we will use $\mathcal{P}$ to replace $\mathcal{M}$ to denote all the linear expectations
dominated by $\mathcal{E}$.

\begin{proposition}\label{capacity-regular}
If the sublinear expectation $\mathcal{E}$ is regular, for any set sequence $\{A_{n}\}_{n\geq1}$ such that $A_{n}\uparrow A$ ($A_{n}\downarrow A$), we have
\[
C(A)=\lim_{n\rightarrow\infty}C(A_{n}).
\]
\end{proposition}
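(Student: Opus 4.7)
The plan is to use the regularity of $\mathcal{E}$ in conjunction with its subadditivity and monotonicity. The key observation is that in both monotone-set cases the indicators $I_{A_n}$ and $I_A$ differ by a bounded nonnegative $\mathcal{F}$-measurable function that decreases pointwise to zero, which is precisely the setup in which the definition of regularity applies.

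First I would handle the decreasing case. Suppose $A_n \downarrow A$. Then $I_{A_n} - I_A = I_{A_n \setminus A}$ is a nonnegative sequence in $\mathcal{X}$ decreasing pointwise to $0$, so regularity yields $\mathcal{E}(I_{A_n} - I_A) \downarrow 0$. Subadditivity then gives
\[
C(A_n) = \mathcal{E}\bigl(I_A + (I_{A_n} - I_A)\bigr) \leq C(A) + \mathcal{E}(I_{A_n} - I_A),
\]
and monotonicity produces $C(A_n) \geq C(A)$. Sandwiching these two inequalities yields $C(A_n) \downarrow C(A)$.

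The increasing case is symmetric. If $A_n \uparrow A$, then $I_A - I_{A_n}$ is a nonnegative sequence in $\mathcal{X}$ decreasing pointwise to $0$, so regularity gives $\mathcal{E}(I_A - I_{A_n}) \downarrow 0$. Subadditivity now reads
\[
C(A) = \mathcal{E}\bigl(I_{A_n} + (I_A - I_{A_n})\bigr) \leq C(A_n) + \mathcal{E}(I_A - I_{A_n}),
\]
so $C(A) - C(A_n) \to 0$, and combined with the monotonicity bound $C(A_n) \leq C(A)$ we obtain $C(A_n) \uparrow C(A)$.

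I do not anticipate any real obstacle; the argument is a straightforward combination of the definition of regularity with the two axioms (subadditivity and monotonicity) of $\mathcal{E}$. The only conceptual care is to notice that the difference of indicators itself lies in $\mathcal{X}$ and is monotone decreasing to zero, so regularity may be invoked on that difference directly, without ever needing to pass through the representing family $\mathcal{P}$. An alternative route would use Lemma \ref{lem-fatou-probility-property} together with the representation $C(A) = \max_{\mu \in \mathcal{P}} \mu(A)$ and classical continuity of each probability measure $\mu$, but the direct approach above is cleaner.
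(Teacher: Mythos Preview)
Your argument is correct. For the decreasing case $A_n \downarrow A$ you do exactly what the paper does: bound $0 \le C(A_n)-C(A) \le \mathcal{E}(I_{A_n}-I_A)$ and invoke regularity on $I_{A_n}-I_A \downarrow 0$.

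For the increasing case $A_n \uparrow A$ you take a genuinely different route from the paper. The paper handles this half via the ``alternative route'' you mention at the end: it uses Lemma~\ref{lem-fatou-probility-property} to know every dominated $\mu$ is countably additive, then writes
\[
C(A)=\sup_{P\in\mathcal{P}}P(A)=\sup_{P\in\mathcal{P}}\sup_{n}P(A_n)=\sup_{n}\sup_{P\in\mathcal{P}}P(A_n)=\lim_{n}C(A_n),
\]
swapping the two suprema. Your version instead applies regularity directly to $I_A-I_{A_n}\downarrow 0$ and sandwiches with subadditivity, mirroring the decreasing case. Your approach is more self-contained (it never touches the representing family $\mathcal{P}$ or Lemma~\ref{lem-fatou-probility-property}) and makes the two monotone cases perfectly symmetric; the paper's approach, on the other hand, exhibits the upward continuity as a purely order-theoretic fact about a supremum of measures, which is conceptually nice but requires the extra lemma as input.
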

\begin{proof}
If $A_{n}\uparrow A$, by Lemma \ref{lem-fatou-probility-property}, we have
\[
C(A)=\sup_{P\in\mathcal{P}}P(A)=\sup_{P\in\mathcal{P}}\sup_{n\in\mathbb{N}}P(A_{n})=\sup_{n\in\mathbb{N}}\sup_{P\in\mathcal{P}}P(A_{n})=\sup_{n\in\mathbb{N}}C(A_{n})=\lim_{n\rightarrow\infty}C(A_{n}).
\]

If $A_{n}\downarrow A$,
\[
0\leq C(A_{n})-C(A)\leq\mathcal{E}(I_{A_{n}}-I_{A}).
\]
Since $I_{A_{n}}-I_{A}\downarrow0$, we have $\lim_{n\rightarrow\infty}\mathcal{E}(I_{A_{n}}-I_{A})=0$. Then $\lim_{n\rightarrow\infty}C(A_{n})=C(A)$.
\end{proof}

\begin{definition}[Independent sequence]\label{weakly independence}
For a sequence $\{\xi_{n}\}_{n\geq 1}\subset\mathcal{X}$, we say it is independent under the sublinear expectation $\mathcal{E}$ if it is pairwise uncorrelated and for any $n>m$, $A\in\sigma(\xi_{1}, \xi_{2}, \cdots, \xi_{m})$ and $B\in\sigma(\xi_{m+1},\xi_{m},\cdots, \xi_{n})$, we have
\[
C(AB)\leq C(A)C(B).
\]
\end{definition}

\subsection{The strong LLN for independent sequences}

For a sequence $\{\xi_{n}\}_{n\geq 1}$, denote $S_{n}=\sum_{i=1}^{n}\xi_{i}$. Some results we used in this subsection are similar as in the classical case for probability measures, we put them in Appendix B.
\begin{lemma}[Extended Ottaviani's inequality]\label{Ottaviani}
Suppose $r, s, t$ are positive numbers and $\{\xi_{n}\}_{n\geq 1}$ is an independent sequence. If for $0\leq k\leq n-1$, we have
\[
c(|\sum_{i=k+1}^{n}\xi_{i}|\leq s)\geq r.
\]
Then
\[
\begin{array}{r@{}l}
C(\max_{1\leq k\leq n}|\sum_{i=1}^{k}\xi_{i}|>s+t)\leq C(|\sum_{i=1}^{n}\xi_{i}|>t)+(1-r)\sum_{k=1}^{n}C(A_{k}),
\end{array}
\]
where $A_{k}=\{\omega; \max_{1\leq i\leq k-1}|S_{i}|\leq s+t, |S_{k}|>s+t\}$.
\end{lemma}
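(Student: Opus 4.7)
The plan is to transport the classical proof of Ottaviani's inequality to the capacity setting, with the role of additivity of the probability measure played by subadditivity of $C$ combined with the one-sided inequality $C(AB)\le C(A)C(B)$ granted by the independence of $\{\xi_n\}$. The structural observation driving the proof is that the family $\{A_k\}$ is exactly a first-passage-time decomposition: if $\tau:=\min\{k:|S_k|>s+t\}$, then $A_k=\{\tau=k\}$, so the $A_k$ are pairwise disjoint, their union equals $\{\max_{1\le k\le n}|S_k|>s+t\}$, and $A_k\in\sigma(\xi_1,\dots,\xi_k)$.

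Next I would introduce the companion events $B_k:=\{|S_n-S_k|\le s\}\in\sigma(\xi_{k+1},\dots,\xi_n)$ (with $B_n:=\Omega$); the hypothesis reads $c(B_k)\ge r$. The geometric core is the triangle inequality: on $A_k\cap B_k$ one has $|S_n|\ge|S_k|-|S_n-S_k|>(s+t)-s=t$, so $A_k\cap B_k\subseteq\{|S_n|>t\}$. Writing $A_k=(A_k\cap B_k)\cup(A_k\cap B_k^c)$ and taking the union over $k$ gives
\[
\{\max_{1\le k\le n}|S_k|>s+t\}\subseteq\{|S_n|>t\}\cup\bigcup_{k=1}^n(A_k\cap B_k^c),
\]
and subadditivity of $C$ then yields $C(\max_{1\le k\le n}|S_k|>s+t)\le C(|S_n|>t)+\sum_{k=1}^n C(A_k\cap B_k^c)$. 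Independence applied to $A_k\in\sigma(\xi_1,\dots,\xi_k)$ and $B_k^c\in\sigma(\xi_{k+1},\dots,\xi_n)$ delivers $C(A_k\cap B_k^c)\le C(A_k)C(B_k^c)$, while the identity $C(B^c)=1-c(B)$, immediate from $C(\cdot)=\max_{\mu}\mu(\cdot)$ and $c(\cdot)=\min_{\mu}\mu(\cdot)$, combined with $c(B_k)\ge r$, gives $C(B_k^c)\le 1-r$. Substituting these bounds produces the stated inequality.

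The main obstacle is conceptual rather than technical: the classical route argues via the equality $P(\bigsqcup_k (A_k\cap B_k))=\sum_k P(A_k\cap B_k)$ and the multiplicative lower bound $P(A_k\cap B_k)\ge r\,P(A_k)$ to obtain the sharper ratio form $P(\max|S_k|>s+t)\le r^{-1}P(|S_n|>t)$. Neither ingredient survives for a capacity — $C$ is only subadditive on disjoint unions, and the independence assumption is a one-sided upper bound — so the estimation must be redirected onto the ``bad'' events $A_k\cap B_k^c$, where subadditivity and sub-multiplicativity both point the correct way. This is exactly what the extra additive term $(1-r)\sum_k C(A_k)$ in the conclusion is designed to absorb.
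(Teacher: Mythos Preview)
Your argument is correct and follows essentially the same route as the paper: define $B_k=\{|S_n-S_k|\le s\}$, use the triangle inequality to get $A_kB_k\subset\{|S_n|>t\}$, split off the $A_kB_k^c$ pieces by subadditivity, and bound each via the independence inequality $C(A_kB_k^c)\le C(A_k)C(B_k^c)=C(A_k)(1-c(B_k))\le(1-r)C(A_k)$. The only cosmetic difference is that the paper phrases the splitting step through $\mathcal{E}$, writing $C(\bigcup_k A_kB_k)=\mathcal{E}[\sum_k I_{A_k}(1-I_{B_k^c})]\ge C(\bigcup_k A_k)-C(\bigcup_k A_kB_k^c)$, whereas you go directly via the set inclusion $\bigcup_k A_k\subset\{|S_n|>t\}\cup\bigcup_k(A_k\cap B_k^c)$; these are the same manipulation.
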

\begin{proof}
Denote
\[
B_{k}=\{\omega; |S_{n}-S_{k}|\leq s\}=\{\omega; |\sum_{i=k+1}^{n}\xi_{i}|\leq s\}.
\]
Since $\{A_{k}\}_{1\leq k\leq n}$ is disjoint with each other and $\sum_{k=1}^{n}A_{k}B_{k}\subset\{|S_{n}|>t\}$, we have
\[
\begin{array}{r@{}l}
C(|\sum_{i=1}^{n}\xi_{i}|>t) & \geq C(\sum_{k=1}^{n}A_{k}B_{k})=\mathcal{E}[\sum_{k=1}^{n}I_{A_{k}}(1-I_{B^{c}_{k}})]\\
 & \geq C(\sum_{k=1}^{n}A_{k})-C(\sum_{k=1}^{n}A_{k}B^{c}_{k})\\
 & \geq C(\sum_{k=1}^{n}A_{k})-\sum_{k=1}^{n}C(A_{k}B^{c}_{k})\\
  & \geq C(\sum_{k=1}^{n}A_{k})-\sum_{k=1}^{n}C(A_{k})C(B^{c}_{k})\\
 & = C(\sum_{k=1}^{n}A_{k})-\sum_{k=1}^{n}C(A_{k})(1-c(B_{k}))\\
\end{array}
\]
Since $c(B_{k})\geq r$, then
\[
\begin{array}{r@{}l}
 & C(\max_{1\leq k\leq n}|\sum_{i=1}^{k}\xi_{i}|>s+t)\\
= & C(\sum_{k=1}^{n}A_{k})\leq C(|\sum_{i=1}^{n}\xi_{i}|>t)+(1-r)\sum_{k=1}^{n}C(A_{k}).
\end{array}
\]
\end{proof}

In order to get the strong LLN, we need the following assumption:

($H_{0}$) For any set sequence $\{A_{n}\}_{n\geq1}$ such that $\bigcup_{n\geq1}A_{n}=\Omega$ and $A_{i}\bigcap A_{j}=\phi$ for $i\not=j$, there exists a constant $M$ such that $\sum_{n=1}^{\infty}C(A_{n})<M$.

\begin{proposition}\label{pro-capacity-q.s.}
Suppose $\mathcal{E}$ is a regular sublinear expectation satisfying Assumption ($H_{0}$) and $\{\xi_{n}\}_{n\geq1}$ is an independent sequence. Then $\sum_{i=1}^{n}\xi_{i}$ converges to some finite valued random variable $C$-q.s. if and only if it converges in capacity.
\end{proposition}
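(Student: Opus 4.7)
I would prove the two implications separately. The forward direction ($C$-q.s.\ $\Rightarrow$ in capacity) is a short continuity argument using Proposition \ref{capacity-regular}; the reverse direction is the substantive one and relies on Lemma \ref{Ottaviani} together with Assumption $(H_0)$.

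First I would dispatch the easy direction. If $S_n\to S$ off a $C$-polar set $N$, then for each $\epsilon>0$ the sets $B_n:=\bigcup_{k\ge n}\{|S_k-S|>\epsilon\}$ decrease to a subset of $N$, so the $\downarrow$ case of Proposition \ref{capacity-regular} gives $C(B_n)\downarrow 0$, and hence $C(|S_n-S|>\epsilon)\le C(B_n)\to 0$.

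For the reverse direction, convergence in capacity first implies Cauchy-in-capacity via the inclusion $\{|S_m-S_n|>\epsilon\}\subset\{|S_m-S|>\epsilon/2\}\cup\{|S_n-S|>\epsilon/2\}$ and subadditivity of $C$. Given $\delta,s>0$, I would choose $N$ so large that $C(|S_{N+n}-S_{N+k}|>s)<\delta$ for all $0\le k<n$; rewriting via the duality $c(\cdot)=1-C(\cdot^c)$, this is precisely $c\bigl(|\sum_{i=N+k+1}^{N+n}\xi_i|\le s\bigr)>1-\delta$, the hypothesis of Lemma \ref{Ottaviani} applied to the independent block $\xi_{N+1},\ldots,\xi_{N+n}$ with $r=1-\delta$ and $t=s$. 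Writing $T_j:=S_{N+j}-S_N$, the lemma yields
\[
C\bigl(\max_{1\le k\le n}|T_k|>2s\bigr)\le C(|T_n|>s)+\delta\sum_{k=1}^n C(A_k)\le \delta(1+M),
\]
where the $A_k$, being pairwise disjoint, may be padded with their complement and empty sets to form a countable partition of $\Omega$, allowing $(H_0)$ to produce the bound $\sum C(A_k)\le M$. Letting $n\to\infty$ and applying the $\uparrow$ case of Proposition \ref{capacity-regular} upgrades this to the tail oscillation estimate $C\bigl(\sup_{k\ge 0}|S_{N+k}-S_N|>2s\bigr)\le \delta(1+M)$.

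To produce a $C$-q.s.\ limit I would extract an increasing sequence $N_j$ taking $s_j=\delta_j=2^{-j}$, which gives $\sum_j C\bigl(\sup_{k\ge 0}|S_{N_j+k}-S_{N_j}|>2^{1-j}\bigr)<\infty$. Countable subadditivity of $C$---obtained by combining finite subadditivity with the $\uparrow$ part of Proposition \ref{capacity-regular}---then yields a Borel–Cantelli conclusion: off a $C$-polar set, the oscillation $\sup_{k\ge 0}|S_{N_j+k}-S_{N_j}|$ is eventually $\le 2^{1-j}$, so $\{S_n(\omega)\}$ is Cauchy in $\mathbb{R}$ and converges to a finite-valued random variable $S^\ast$. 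The already-proved forward direction combined with uniqueness of capacity-limits then identifies $S^\ast$ with $S$ up to a $C$-polar set. The main technical obstacle is the parameter tuning in step two, namely arranging $(\delta,s,r,N)$ so that Lemma \ref{Ottaviani} together with $(H_0)$ produces the summable oscillation bound along a subsequence; once that estimate is in hand, continuity of $C$, countable subadditivity, and Cauchy extraction conclude routinely.
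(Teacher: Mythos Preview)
Your proposal is correct and follows essentially the same route as the paper: Cauchy-in-capacity feeds the Ottaviani inequality (Lemma \ref{Ottaviani}), Assumption $(H_0)$ bounds $\sum_k C(A_k)$, and the resulting tail-oscillation estimate is upgraded to $C$-q.s.\ convergence. The only cosmetic difference is the packaging of the last step: the paper shows $C(\sup_{m,n>N}|S_n-S_m|>4\epsilon)\le 2(1+M)\delta$ and then invokes the appendix criterion Theorem \ref{converge q.s.} directly, whereas you unfold that criterion by hand via a Borel--Cantelli subsequence extraction; similarly, for the forward direction the paper cites Theorems \ref{converge q.s.} and \ref{Chauchy-convergence}, while your decreasing-sets argument via Proposition \ref{capacity-regular} is a slightly more direct version of the same idea.
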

\begin{proof}
If $\sum_{i=1}^{n}\xi_{i}$ converges $C$-q.s., it converges in capacity by Theorem \ref{converge q.s.} and Theorem \ref{Chauchy-convergence}.

On the other hand, if $\sum_{i=1}^{n}\xi_{i}$ converges in capacity, by Theorem \ref{Chauchy-convergence}, for any $\epsilon>0$ and $\delta>0$, there exists an $N\in\mathbb{N}$ such that for any $m, n>N$,
\[
C(|\sum_{i=m}^{n}\xi_{i}|>\epsilon)<\delta \quad \textrm{and} \quad c(|\sum_{i=m}^{n}\xi_{i}|\leq\epsilon)\geq1-\delta.
\]

Take $r=1-\delta$ and $s=t=\epsilon$ in Lemma \ref{Ottaviani}. Then
\[
C(\sup_{1\leq k\leq n}|\sum_{i=N+1}^{N+k}\xi_{i}|>2\epsilon)\leq C(|\sum_{i=N+1}^{N+n}\xi_{i}|>\epsilon)+\delta\sum_{k=1}^{n}C(A_{k})<(1+M)\delta,
\]
where $A_{k}=\{\omega; \max_{1\leq i\leq k-1}|S_{N+i}-S_{N}|\leq 2\epsilon, |S_{N+k}-S_{N}|>2\epsilon\}$. Thus
\[
C(\sup_{k\geq 1}|\sum_{i=N+1}^{N+k}\xi_{i}|>2\epsilon)<(1+M)\delta
\]
and
\[
C(\sup_{m, n>N}|\sum_{i=m}^{n}\xi_{i}|>4\epsilon)\leq C(\sup_{m>N}|\sum_{i=N+1}^{m}\xi_{i}|>2\epsilon)+C(\sup_{n>N}|\sum_{i=N+1}^{n}\xi_{i}|>2\epsilon)\leq2(1+M)\delta.
\]
By Theorem \ref{converge q.s.}, $\sum_{i=1}^{n}\xi_{i}$ converges $C$-q.s..
\end{proof}
\begin{theorem}[The strong LLN]\label{STRONG LLN}
Suppose $\mathcal{E}$ is a regular sublinear expectation satisfying Assumption ($H_{0}$) and $\{\xi_{n}\}_{n\geq1}$ is an independent sequence. If $\sup_{i\geq 1}[\mathcal{E}(\xi_{i})+\mathcal{E}(-\xi_{i})]<\infty$ and $\sum_{i=1}^{\infty}\frac{1}{i^{2p}}\mathcal{E}(\xi_{i}-\mathcal{E}(\xi_{i}))^{2}<\infty$, then for any $p>1$, there exist constants $\{\lambda_{i}\}_{i\geq 1}$ such that
\[
\lim_{n\rightarrow\infty}\frac{1}{n^{p}}\sum_{i=1}^{n}(\xi_{i}-\lambda_{i})=0,\quad C-q.s..
\]
\end{theorem}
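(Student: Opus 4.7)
The plan is to reduce the strong LLN to showing that the series $\sum_{i=1}^\infty\eta_i$ with $\eta_i:=(\xi_i-\lambda_i)/i^p$ converges $C$-q.s.; once this is done, the classical (pointwise) Kronecker lemma with $a_n=n^p$ immediately yields $\frac{1}{n^p}\sum_{i=1}^n(\xi_i-\lambda_i)=\frac{1}{n^p}\sum_{i=1}^n i^p\eta_i\to 0$ on the convergence set. Since $\eta_i$ is affine in $\xi_i$, the sequence $\{\eta_i\}$ inherits independence from $\{\xi_i\}$, so by Proposition \ref{pro-capacity-q.s.} it is enough to show that the partial sums $T_n:=\sum_{i=1}^n\eta_i$ converge in capacity. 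The $\lambda_i$ are selected inductively exactly as in the weak LLN: take $\lambda_1=\mathcal{E}(\xi_1)$ and, for $i\geq 2$, use Remark \ref{remark} together with Lemma \ref{main-aux-general} to pick $\lambda_i\in[-\mathcal{E}(-\xi_i),\mathcal{E}(\xi_i)]$ with $\mathcal{E}[\eta_i T_{i-1}]=0$; the one-step expansion from Theorem \ref{general-WEAK LLN} then gives $\mathcal{E}[T_n^2]\leq\sum_{i=1}^n\mathcal{E}[\eta_i^2]$, and the pointwise inequality $(\xi_i-\lambda_i)^2\leq 2(\xi_i-\mathcal{E}(\xi_i))^2+2(\mathcal{E}(\xi_i)+\mathcal{E}(-\xi_i))^2$ combined with the two hypotheses on $\{\xi_i\}$ shows $\Sigma:=\sum_i\mathcal{E}[\eta_i^2]<\infty$.

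The substantive step is to upgrade this $L^2$ bound to a Cauchy-in-capacity statement for $\{T_n\}$. I induct on $n\geq m$ using
\[
\mathcal{E}[(T_n-T_m)^2]\leq\mathcal{E}[(T_{n-1}-T_m)^2]+2\mathcal{E}[(T_{n-1}-T_m)\eta_n]+\mathcal{E}[\eta_n^2].
\]
Pairwise uncorrelation (cf.\ Remark \ref{remark}) gives $E_\mu[(T_{n-1}-T_m)\eta_n]=E_\mu[T_{n-1}-T_m]\,E_\mu[\eta_n]$ for every $E_\mu\in\mathcal{M}$, while the constraint $\lambda_i\in[-\mathcal{E}(-\xi_i),\mathcal{E}(\xi_i)]$ forces $|E_\mu[\eta_i]|\leq K/i^p$ with $K:=\sup_i[\mathcal{E}(\xi_i)+\mathcal{E}(-\xi_i)]<\infty$. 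Telescoping and exchanging the resulting double sum yield
\[
\mathcal{E}[(T_n-T_m)^2]\leq\sum_{i=m+1}^n\mathcal{E}[\eta_i^2]+K^2\Bigl(\sum_{j=m+1}^{\infty}j^{-p}\Bigr)^2,
\]
and both pieces vanish as $m\to\infty$ since $\Sigma<\infty$ and, crucially, $\sum j^{-p}<\infty$ because $p>1$. Markov's inequality (Lemma \ref{Markov}) converts this into $C(|T_n-T_m|>\epsilon)\to 0$, so $\{T_n\}$ is Cauchy in capacity; the Cauchy criterion for convergence in capacity produces a limit, Proposition \ref{pro-capacity-q.s.} promotes this to convergence $C$-q.s., and Kronecker closes the argument.

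The main obstacle is precisely this cross-term control. In Theorem \ref{general-WEAK LLN} only the single identity $\mathcal{E}[\eta_n T_{n-1}]=0$ sufficed, but a Cauchy statement requires smallness of $\mathcal{E}[\eta_n(T_{n-1}-T_m)]$ for \emph{every} $m<n$, and no fixed $\lambda_n$ can annihilate all of these simultaneously. The rescue is that the interval constraint $\lambda_i\in[-\mathcal{E}(-\xi_i),\mathcal{E}(\xi_i)]$ by itself forces $|E_\mu[\eta_i]|=O(i^{-p})$ under every dominated linear expectation, and the summability $\sum i^{-p}<\infty$ coming from $p>1$ then makes the cumulative contribution of the residual cross terms a convergent tail that vanishes with $m$.
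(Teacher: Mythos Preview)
Your proof is correct and follows the same architecture as the paper: choose the $\lambda_i$ inductively via Lemma~\ref{main-aux-general}, show that $T_n=\sum_{i\le n}\eta_i$ is Cauchy in capacity via Markov's inequality and an $L^2$ bound, invoke Proposition~\ref{pro-capacity-q.s.} to upgrade to $C$-q.s.\ convergence, and finish with Kronecker. The one place you diverge is the cross-term estimate. The paper controls $\mathcal{E}\bigl[(\sum_{j=m+1}^n\eta_j)(-\sum_{i=1}^{m-1}\eta_i)\bigr]$ by a second application of Mazur--Orlicz (Lemma~\ref{main-aux-general}), producing an auxiliary constant $\lambda_{n,m}\in[-\mathcal{E}(-\sum_{j}\xi_j/j^p),\mathcal{E}(\sum_{j}\xi_j/j^p)]$ that annihilates this cross term, and then bounding $|\lambda_{n,m}-\sum_{j}\lambda_j/j^p|$ via the interval constraints. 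You instead factor $E_\mu[(T_{j-1}-T_m)\eta_j]=E_\mu[T_{j-1}-T_m]\,E_\mu[\eta_j]$ directly under each dominated $E_\mu$ (legitimate by Definition~\ref{uncorrelated} and Remark~\ref{remark}) and use $|E_\mu[\eta_i]|\le K/i^p$. Your route is more elementary---it avoids the second Mazur--Orlicz step entirely---and lands on essentially the same tail bound (up to a harmless factor of $2$ that slipped in the final display). Both arguments rely in exactly the same way on $p>1$ to make $\sum j^{-p}$ a convergent tail.
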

\begin{proof}
Take $\lambda_{1}:=\mathcal{E}(\xi_{1})$. $\{\xi_{n}\}_{n\geq1}$ is an independent sequence induces $\{\frac{\xi_{i}}{i^{p}}\}_{i\geq 1}$ is pairwise uncorrelated. By Remark \ref{remark} and Lemma \ref{main-aux-general}, there exists constants $\{\lambda_{i}\}_{i\geq 1}$ such that $\lambda_{i}\in[-\mathcal{E}(-\xi_{i}), \mathcal{E}(\xi_{i})]$ and
\[
\mathcal{E}[\frac{1}{j^{p}}(\xi_{j}-\lambda_{j})(\sum_{i=1}^{j-1}\frac{1}{i^{p}}[\xi_{i}-\lambda_{i}])]=0, \forall j\geq 2.
\]
Then for any $n>m$, we have
\[
\begin{array}{r@{}l}
& C(|\sum_{i=m}^{n}\frac{1}{i^{p}}(\xi_{i}-\lambda_{i})|>\epsilon)\\
\leq & \mathcal{E}[\frac{(\sum_{i=m}^{n}\frac{1}{i^{p}}[\xi_{i}-\lambda_{i}])^{2}}{\epsilon^{2}}]\\
= & \frac{1}{\epsilon^{2}}\mathcal{E}[\sum_{i=m}^{n}\frac{1}{i^{2p}}(\xi_{i}-\lambda_{i})^{2}+2\sum_{j=m+1}^{n}\frac{1}{j^{p}}(\xi_{j}-\lambda_{j})(\sum_{i=m}^{j-1}\frac{1}{i^{p}}[\xi_{i}-\lambda_{i}])]\\
\leq & \frac{1}{\epsilon^{2}}[\sum_{i=m}^{n}\frac{1}{i^{2p}}\mathcal{E}(\xi_{i}-\lambda_{i})^{2}]+\frac{2}{\epsilon^{2}}\mathcal{E}[\sum_{j=m+1}^{n}\frac{1}{j^{p}}(\xi_{j}-\lambda_{j})(\sum_{i=m}^{j-1}\frac{1}{i^{p}}[\xi_{i}-\lambda_{i}])]\\
= & \frac{1}{\epsilon^{2}}[\sum_{i=m}^{n}\frac{1}{i^{2p}}\mathcal{E}(\xi_{i}-\lambda_{i})^{2}]\\
& +\frac{2}{\epsilon^{2}}\mathcal{E}[\sum_{j=m+1}^{n}\frac{1}{j^{p}}(\xi_{j}-\lambda_{j})(\sum_{i=1}^{j-1}\frac{1}{i^{p}}[\xi_{i}-\lambda_{i}]-\sum_{i=1}^{m-1}\frac{1}{i^{p}}[\xi_{i}-\lambda_{i}])]\\
\leq & \frac{1}{\epsilon^{2}}[\sum_{i=m}^{n}\frac{1}{i^{2p}}\mathcal{E}(\xi_{i}-\lambda_{i})^{2}]+\frac{2}{\epsilon^{2}}\mathcal{E}[\sum_{j=m+1}^{n}\frac{1}{j^{p}}(\xi_{j}-\lambda_{j})(-\sum_{i=1}^{m-1}\frac{1}{i^{p}}[\xi_{i}-\lambda_{i}])]\\
= & \frac{1}{\epsilon^{2}}[\sum_{i=m}^{n}\frac{1}{i^{2p}}\mathcal{E}(\xi_{i}-\lambda_{i})^{2}]+\frac{2}{\epsilon^{2}}\mathcal{E}[(\sum_{j=m+1}^{n}\frac{1}{j^{p}}[\xi_{j}-\lambda_{j}])(-\sum_{i=1}^{m-1}\frac{1}{i^{p}}[\xi_{i}-\lambda_{i}])].
\end{array}
\]
By Remark \ref{remark} and Lemma \ref{main-aux-general}, there exists a constant $\lambda_{n, m}$ such that
\[
\mathcal{E}[(\sum_{j=m+1}^{n}\frac{\xi_{j}}{j^{p}}-\lambda_{n, m})(-\sum_{i=1}^{m-1}\frac{1}{i^{p}}[\xi_{i}-\lambda_{i}])]=0
\]
and $\lambda_{n, m}\in[-\mathcal{E}(-(\sum_{j=m+1}^{n}\frac{\xi_{j}}{j^{p}})), \mathcal{E}(\sum_{j=m+1}^{n}\frac{\xi_{j}}{j^{p}})]$, which induces
\[
\sum_{j=m+1}^{n}-\mathcal{E}(-(\frac{\xi_{j}}{j^{p}}))\leq \lambda_{n, m}\leq \sum_{j=m+1}^{n}\mathcal{E}(\frac{\xi_{j}}{j^{p}}).
\]
Since $\sup_{i\geq 1}[\mathcal{E}(\xi_{i})+\mathcal{E}(-\xi_{i})]<\infty$, there exists $M>0$ such that $\sup_{i\geq 1}[\mathcal{E}(\xi_{i})+\mathcal{E}(-\xi_{i})]\leq M$. Then
\[
\begin{array}{r@{}l}
 & \mathcal{E}[(\sum_{j=m+1}^{n}\frac{1}{j^{p}}[\xi_{j}-\lambda_{j}])(-\sum_{i=1}^{m-1}\frac{1}{i^{p}}[\xi_{i}-\lambda_{i}])]\\
= & \mathcal{E}[(\sum_{j=m+1}^{n}\frac{1}{j^{p}}[\xi_{j}-\lambda_{j}])(-\sum_{i=1}^{m-1}\frac{1}{i^{p}}[\xi_{i}-\lambda_{i}])]\\
 & -\mathcal{E}[(\sum_{j=m+1}^{n}\frac{\xi_{j}}{j^{p}}-\lambda_{n,m})(-\sum_{i=1}^{m-1}\frac{1}{i^{p}}[\xi_{i}-\lambda_{i}])]\\
\leq & \mathcal{E}[\lambda_{n,m}-\sum_{j=m+1}^{n}\frac{\lambda_{j}}{j^{p}})(-\sum_{i=1}^{m-1}\frac{1}{i^{p}}[\xi_{i}-\lambda_{i}])]\\
\leq & (\sum_{j=m+1}^{n}\frac{M}{j^{p}})(\sum_{i=1}^{m-1}\frac{M}{i^{p}})
\end{array}
\]

Since the sequence $\{\sum_{i=1}^{n}\frac{1}{i^{p}}\}_{n\geq 1}$ is convergent when $p>1$, we have
\[
\lim_{n, m\rightarrow\infty}(\sum_{j=m+1}^{n}\frac{M}{j^{p}})(\sum_{i=1}^{m-1}\frac{M}{i^{p}})=0.
\]

Since
\[
\begin{array}{r@{}l}
 & \sum_{i=1}^{\infty}\frac{1}{i^{2p}}\mathcal{E}(\xi_{i}-\lambda_{i})^{2}\\
\leq & \sum_{i=1}^{\infty}\frac{1}{i^{2p}}\mathcal{E}(\xi_{i}-\mathcal{E}(\xi_{i}))^{2}+\sum_{i=1}^{\infty}\frac{1}{i^{2p}}(\mathcal{E}(\xi_{i})+\mathcal{E}(-\xi_{i}))^{2}<\infty,
\end{array}
\]
we have
\[\lim_{n, m\rightarrow\infty}\sum_{i=m}^{n}\frac{1}{i^{2p}}\mathcal{E}(\xi_{i}-\lambda_{i})^{2}=0.
\]
It shows $\{S_{n}'; S_{n}'=\sum_{i=1}^{n}\frac{1}{i^{p}}(\xi_{i}-\lambda_{i})\}_{n\geq1}$  is a Cauchy sequence in capacity. By Theorem \ref{Chauchy-convergence}, it converges in capacity.

Since $\{\frac{1}{n^{p}}(\xi_{n}-\lambda_{n})\}_{n\geq1}$ is an independent sequence, by Proposition \ref{pro-capacity-q.s.}, $\{S_{n}'\}_{n\geq 1}$ converges $C$-q.s.. With the Kronecker's lemma, we have
\[
\lim_{n\rightarrow\infty}\frac{1}{n^{p}}\sum_{i=1}^{n}(\xi_{i}-\lambda_{i})=0 \quad C-q.s..
\]
\end{proof}
\appendix

\section{Some basic results}

In this part, we restate the Mazur-Orlicz theorem used in this paper.
\begin{theorem}
[Mazur-Orlicz theorem]\label{MO} Suppose $\mathcal{X}$ is a nonzero space,
$\mathcal{E}:\mathcal{X}\rightarrow\mathbb{R}$ is sublinear and $\mathcal{D}$ is a
nonempty convex subset of $\mathcal{X}$. Then there exists a linear functional
$L$ on $\mathcal{X}$ such that $L$ is dominated by $\mathcal{E}$ and
\[
\inf_{\xi\in\mathcal{D}}L(\xi)=\inf_{\xi\in\mathcal{D}}\mathcal{E}(\xi).
\]

\end{theorem}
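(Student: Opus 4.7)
The plan is to reduce the theorem to the Hahn--Banach extension theorem by constructing an auxiliary sublinear functional that is dominated by $\mathcal{E}$ and is already ``small'' on $-\mathcal{D}$. Write $\alpha := \inf_{\xi \in \mathcal{D}} \mathcal{E}(\xi)$. If $\alpha = -\infty$ the conclusion is trivial: any linear $L$ dominated by $\mathcal{E}$ (obtained by Hahn--Banach applied directly to $\mathcal{E}$) satisfies $\inf_{\xi \in \mathcal{D}} L(\xi) \leq \inf_{\xi \in \mathcal{D}} \mathcal{E}(\xi) = -\infty$. So assume $\alpha \in \mathbb{R}$, and define
\[
q(\xi) := \inf\{\mathcal{E}(\xi + t d) - t \alpha : t \geq 0,\ d \in \mathcal{D}\}, \qquad \xi \in \mathcal{X}.
\]

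The first task is to verify the structural properties of $q$. Finiteness follows from subadditivity of $\mathcal{E}$: one has $\mathcal{E}(\xi + td) \geq t\mathcal{E}(d) - \mathcal{E}(-\xi) \geq t\alpha - \mathcal{E}(-\xi)$, so $q(\xi) \geq -\mathcal{E}(-\xi) > -\infty$, while the choice $t = 0$ gives $q(\xi) \leq \mathcal{E}(\xi)$. Positive homogeneity follows by the substitution $s = t/\lambda$. The key structural step is subadditivity: given approximate minimizers $(t_1, d_1), (t_2, d_2)$ for $q(\xi_1)$ and $q(\xi_2)$ with $t_1 + t_2 > 0$, set $d := (t_1 d_1 + t_2 d_2)/(t_1+t_2) \in \mathcal{D}$ using convexity of $\mathcal{D}$, and use subadditivity of $\mathcal{E}$ to get
\[
\mathcal{E}(\xi_1 + t_1 d_1) + \mathcal{E}(\xi_2 + t_2 d_2) \geq \mathcal{E}(\xi_1 + \xi_2 + (t_1+t_2)d).
\]
Subtracting $(t_1+t_2)\alpha$ and passing to the infimum yields $q(\xi_1) + q(\xi_2) \geq q(\xi_1 + \xi_2)$; the degenerate case $t_1 = t_2 = 0$ is handled directly from $q \leq \mathcal{E}$.

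Two further inequalities drive the conclusion. Taking $t = 1$ and $d = d_0$ for any fixed $d_0 \in \mathcal{D}$ gives $q(-d_0) \leq \mathcal{E}(0) - \alpha = -\alpha$. Now I apply the Hahn--Banach theorem to the sublinear functional $q$: there exists a linear functional $L$ on $\mathcal{X}$ with $L \leq q$. Then $L \leq q \leq \mathcal{E}$, so $L$ is dominated by $\mathcal{E}$. Moreover, linearity of $L$ gives
\[
L(d_0) = -L(-d_0) \geq -q(-d_0) \geq \alpha \quad \text{for every } d_0 \in \mathcal{D},
\]
so $\inf_{\xi \in \mathcal{D}} L(\xi) \geq \alpha$. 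The reverse inequality follows at once from $L \leq \mathcal{E}$, and equality is obtained.

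The main obstacle I expect is verifying subadditivity of $q$ cleanly: this is the step where convexity of $\mathcal{D}$ is essential, and one must take care with the boundary case $t_1 = t_2 = 0$. Everything else is a routine consequence of sublinearity of $\mathcal{E}$ and the standard Hahn--Banach extension of a linear functional dominated by a sublinear majorant.
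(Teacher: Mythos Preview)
Your argument is correct. The paper itself does not supply a proof of this theorem at all; its entire ``proof'' is the single sentence ``Refer the Lemma 1.6 of Chapter 1 in \cite{Simons}.'' What you have written is precisely the standard proof of the Mazur--Orlicz theorem that appears in Simons' book: one builds the auxiliary sublinear functional $q(\xi)=\inf_{t\ge 0,\,d\in\mathcal{D}}\bigl(\mathcal{E}(\xi+td)-t\alpha\bigr)$, checks that convexity of $\mathcal{D}$ gives subadditivity of $q$, and then applies Hahn--Banach to $q$. So your proposal is not a different route but rather an explicit write-up of the very argument the paper defers to by citation. The only places worth tightening in a final version are (i) making the $\epsilon$-approximation explicit in the subadditivity step so the case split $t_1+t_2>0$ versus $t_1=t_2=0$ is unambiguous, and (ii) noting that $\mathcal{E}(0)=0$ follows from positive homogeneity, which you use when computing $q(-d_0)\le -\alpha$.
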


\begin{proof}
Refer the Lemma 1.6 of Chapter 1 in \cite{Simons}.
\end{proof}

\section{The results used in proving the strong LLN}
\begin{theorem}\label{converge q.s.}
If the sublinear expectation $\mathcal{E}$ is regular, for a sequence $\{\xi_{i}\}_{i\geq 1}$, $S_{n}=\sum_{i=1}^{n}\xi_{i}$ converges to some finite valued random variable $S$, $C$-q.s. if and only if for any $\epsilon>0$, we have
\[
\lim_{n\rightarrow\infty} C(\sup_{i, j> n}|\sum_{m=i}^{j}\xi_{m}|>\epsilon)=0.
\]
\end{theorem}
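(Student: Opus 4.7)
The plan is to split the proof into the two directions, using the regularity of $\mathcal{E}$ (through Proposition \ref{capacity-regular}) as the main tool on both sides. Throughout, let $A_n(\epsilon):=\{\omega:\sup_{i,j>n}|\sum_{m=i}^{j}\xi_{m}(\omega)|>\epsilon\}$, and note that $A_n(\epsilon)$ is decreasing in $n$, since a sup over a smaller index range can only shrink.

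For the direction ($\Rightarrow$), I would argue that pointwise convergence of $S_n(\omega)$ to a finite limit $S(\omega)$ is the same as the real sequence $\{S_n(\omega)\}$ being Cauchy. Hence, outside the $C$-polar exceptional set $N$ where convergence fails, for every $\epsilon>0$ we have $\omega\notin A_n(\epsilon)$ for all sufficiently large $n$. This gives $\bigcap_{n}A_n(\epsilon)\subseteq N$, so $C(\bigcap_n A_n(\epsilon))=0$. Since $A_n(\epsilon)\downarrow\bigcap_n A_n(\epsilon)$, Proposition \ref{capacity-regular} yields $C(A_n(\epsilon))\to 0$, which is exactly the Cauchy-type condition.

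For the direction ($\Leftarrow$), I would use a standard Borel--Cantelli-style extraction, adapted to the sublinear setting via the regularity of $C$. Assuming $C(A_n(\epsilon))\to 0$ for every $\epsilon>0$, pick indices $n_k$ with $C(A_{n_k}(2^{-k}))<2^{-k}$, and set $D_K:=\bigcup_{k\geq K}A_{n_k}(2^{-k})$. By finite subadditivity of $\mathcal{E}$ together with the monotone-increasing part of Proposition \ref{capacity-regular},
\[
C(D_K)=\lim_{N\to\infty}C\!\left(\bigcup_{k=K}^{N}A_{n_k}(2^{-k})\right)\leq\sum_{k=K}^{\infty}C(A_{n_k}(2^{-k}))\leq 2^{-K+1}.
\]
For each $\omega\notin D_K$ and every $k\geq K$ we have $\sup_{i,j>n_k}|\sum_{m=i}^{j}\xi_{m}(\omega)|\leq 2^{-k}$, so $\{S_n(\omega)\}$ is a Cauchy sequence in $\mathbb{R}$ and thus converges to some finite limit $S(\omega)$. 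The exceptional set where convergence might fail is therefore contained in $\bigcap_{K}D_K$, and since $D_K\downarrow\bigcap_K D_K$ we apply Proposition \ref{capacity-regular} once more to get $C(\bigcap_K D_K)=\lim_K C(D_K)=0$. Defining $S$ arbitrarily on this $C$-polar set finishes the argument.

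The only nontrivial point is establishing countable subadditivity of $C$, but this comes for free from regularity via the increasing case of Proposition \ref{capacity-regular}; once that is in hand, the rest is the standard Cauchy--convergent equivalence in $\mathbb{R}$. I do not expect a real obstacle here beyond being careful that $A_n(\epsilon)$ is decreasing and that the Cauchy criterion applies pointwise outside a polar set.
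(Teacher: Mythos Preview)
Your proposal is correct. The $\Rightarrow$ direction is essentially identical to the paper's: both observe that outside the polar exceptional set the Cauchy criterion holds pointwise, so $\bigcap_n A_n(\epsilon)$ lies in the polar set, and then apply the decreasing case of Proposition~\ref{capacity-regular}.

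For $\Leftarrow$ you take a genuinely different, though equally valid, route. The paper avoids the subsequence extraction entirely: it sets $B_k:=\bigcap_n A_n(1/k)$, notes that by hypothesis together with the decreasing continuity of $C$ each $B_k$ is $C$-polar, and then, since $B_k$ is \emph{increasing} in $k$ (smaller threshold, larger set), applies the increasing case of Proposition~\ref{capacity-regular} to get $C(\bigcup_k B_k)=\lim_k C(B_k)=0$. Outside $\bigcup_k B_k$ the pointwise Cauchy criterion holds for every $\epsilon>0$, and one is done. Your Borel--Cantelli style argument reaches the same conclusion but needs the additional step of manufacturing a summable bound $\sum_k C(A_{n_k}(2^{-k}))<\infty$ and invoking countable subadditivity; the paper's version is shorter because it exploits the fact that the polar sets $B_k$ are already nested, so no summability is needed. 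On the other hand, your argument is the more robust template---it would still work if one only knew $C(A_n(\epsilon))\to 0$ along some subsequence, or in settings where the monotone limits of Proposition~\ref{capacity-regular} were available only in one direction.
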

\begin{proof}
Since $\{\sup_{i, j> n}|\sum_{m=i}^{j}\xi_{m}|>\epsilon\}=\bigcup_{i ,j> n}\{|\sum_{m=i}^{j}\xi_{m}|>\epsilon\}$, by Proposition \ref{capacity-regular}, we have
\[
\lim_{n\rightarrow\infty}C(\sup_{i, j> n}|\sum_{m=i}^{j}\xi_{m}|>\epsilon)=\lim_{n\rightarrow\infty} C(\bigcup_{i ,j> n}\{|\sum_{m=i}^{j}\xi_{m}|>\epsilon\})=C(\bigcap_{n=1}\bigcup_{i ,j> n}\{|\sum_{m=i}^{j}\xi_{m}|>\epsilon\}).
\]

$\Rightarrow$ For any $\epsilon>0$ and $\omega\in\{\lim_{n\rightarrow\infty}S_{n}(\omega)=S(\omega)\}$, there exists an $N(\epsilon, \omega)$ such that $|S_{j}-S_{i}|=|\sum_{m=i}^{j}\xi_{m}|\leq\epsilon$ for any $i, j>N(\epsilon, \omega)$, i.e.
\[
\omega\not\in\bigcap_{n=1}\bigcup_{i ,j> n}\{|\sum_{m=i}^{j}\xi_{m}|>\epsilon\}.
\]
Then
\[
\bigcap_{n=1}\bigcup_{i ,j> n}\{|\sum_{m=i}^{j}\xi_{m}|>\epsilon\}\subset\{\lim_{n\rightarrow\infty}S_{n}(\omega)\not=S(\omega)\}.
\]
Since $C(\{\lim_{n\rightarrow\infty}S_{n}(\omega)\not=S(\omega)\})=0$, then
\[
\begin{array}{r@{}l}
\lim_{n\rightarrow\infty}C(\sup_{i, j> n}|\sum_{m=i}^{j}\xi_{m}|>\epsilon) & =C(\bigcap_{n=1}\bigcup_{i ,j> n}\{|\sum_{m=i}^{j}\xi_{m}|>\epsilon\})\\
 & \leq C(\{\lim_{n\rightarrow\infty}S_{n}(\omega)\not=S(\omega)\})=0.
\end{array}
\]
$\Leftarrow$ Take $A_{k}:=\bigcap_{n=1}\bigcup_{i ,j\geq n}\{|\sum_{m=i}^{j}\xi_{m}|>\frac{1}{k}\}$. Then $C(A_{k})=0$ for any $k\geq 1$. By Proposition \ref{capacity-regular}, we have
\[
C(\bigcup_{k\geq 1}A_{k})=\lim_{k\rightarrow\infty}C(A_{k})=0.
\]
For any $\omega\not\in \bigcup_{k\geq 1}A_{k}$,
\[
\omega\in A^{c}_{k}=\bigcup_{n=1}\bigcap_{i ,j\geq n}\{|\sum_{m=i}^{j}\xi_{m}|\leq\frac{1}{k}\}, \quad \forall k\geq 1.
\]
For any $\epsilon>0$, choose a $k$ such that $\frac{1}{k}<\epsilon$. There exists an $N(\epsilon, \omega)$ such that $\omega\in\{|\sum_{m=i}^{j}\xi_{m}|\leq\frac{1}{k}\}$ for any $i, j>N(\epsilon, \omega)$, i.e.
\[
|\sum_{m=i}^{j}\xi_{m}(\omega)|\leq\frac{1}{k}<\epsilon, \quad\forall i, j>N(\epsilon, \omega).
\]
\end{proof}
\begin{theorem}\label{lem-subsequence}
If the sublinear expectation $\mathcal{E}$ is regular and $\{\xi_{n}\}_{n\geq 1}$ is a Cauchy sequence in capacity, there exists a subsequence $\{\xi_{n_{k}}\}_{k\geq1}$ converges to some $\xi$, $C$-q.s..
\end{theorem}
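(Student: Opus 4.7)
The plan is to mimic the classical construction of an almost-surely convergent subsequence of a Cauchy-in-measure sequence, with the sole new ingredient being that countable manipulations of $C$ are justified through the regularity of $\mathcal{E}$ (via Proposition \ref{capacity-regular}) rather than through $\sigma$-additivity of a probability measure.

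First I would use the Cauchy-in-capacity hypothesis to extract indices $n_1 < n_2 < \cdots$ such that
\[
C\bigl(|\xi_{n_{k+1}}-\xi_{n_k}|>2^{-k}\bigr)<2^{-k},\qquad k\ge 1.
\]
Writing $A_k:=\{|\xi_{n_{k+1}}-\xi_{n_k}|>2^{-k}\}$, the goal is to show the exceptional set $A:=\bigcap_{m\ge 1}\bigcup_{k\ge m}A_k$ is $C$-polar, and then define $\xi$ as the pointwise limit of $\xi_{n_k}$ on $A^c$ (say $\xi:=0$ on $A$).

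The key step is a Borel--Cantelli-type bound that uses only finite subadditivity plus the monotone continuity of $C$ furnished by Proposition \ref{capacity-regular}. For fixed $m$, $\bigcup_{k=m}^{N}A_k\uparrow\bigcup_{k\ge m}A_k$ as $N\to\infty$, so
\[
C\Bigl(\bigcup_{k\ge m}A_k\Bigr)=\lim_{N\to\infty}C\Bigl(\bigcup_{k=m}^{N}A_k\Bigr)\le\lim_{N\to\infty}\sum_{k=m}^{N}C(A_k)\le\sum_{k=m}^{\infty}2^{-k}=2^{-m+1}.
\]
Since $\bigcup_{k\ge m}A_k\downarrow A$, a second appeal to Proposition \ref{capacity-regular} gives $C(A)=\lim_{m\to\infty}C(\bigcup_{k\ge m}A_k)=0$.

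Outside $A$, by definition only finitely many $A_k$ occur at each $\omega$, so eventually $|\xi_{n_{k+1}}(\omega)-\xi_{n_k}(\omega)|\le 2^{-k}$; the telescoping bound $\sum_{k\ge K}2^{-k}<\infty$ then shows $\{\xi_{n_k}(\omega)\}_{k\ge 1}$ is Cauchy in $\mathbb{R}$, hence convergent to some $\xi(\omega)$. This defines $\xi$ $C$-q.s.\ and yields $\xi_{n_k}\to\xi$ $C$-q.s. The main conceptual obstacle—the fact that $C$ is in general only finitely subadditive—is precisely what the regular assumption is designed to circumvent via Proposition \ref{capacity-regular}; once that replacement for $\sigma$-additivity is in hand the argument is entirely routine.
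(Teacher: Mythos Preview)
Your proof is correct and follows essentially the same route as the paper: extract a subsequence with $C(|\xi_{n_{k+1}}-\xi_{n_k}|>2^{-k})<2^{-k}$, use regularity (via Proposition~\ref{capacity-regular}) for both the upward and downward limits to conclude $C(\limsup A_k)=0$, and then obtain pointwise convergence on the complement by telescoping. The only cosmetic difference is that you explicitly set $\xi:=0$ on the exceptional set, which the paper leaves implicit.
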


\begin{proof} Consider the subsequence $\{\xi_{n_{k}}\}_{k\geq1}$ such that
\[
C(|\xi_{n_{k+1}}-\xi_{n_{k}}|>\frac{1}{2^{k}})<\frac{1}{2^{k}}.
\]
Take $A_{k}:=\{|\xi_{n_{k+1}}-\xi_{n_{k}}|>\frac{1}{2^{k}}\}$.
Since $\mathcal{E}$ is regular, we have
\[
C(\bigcup^{\infty}_{i=k}A_{i})=\lim_{n\rightarrow\infty}C(\bigcup^{n}_{i=k}\{|\xi_{n_{i+1}}-\xi_{n_{i}}|>\frac{1}{2^{i}}\})\leq \sum_{i=k}^{\infty}C(|\xi_{n_{i+1}}-\xi_{n_{i}}|>\frac{1}{2^{i}})\leq\frac{1}{2^{k-1}}.
\]
Then
\[
C(\limsup A_{k})=\lim_{k\rightarrow\infty}C(\bigcup^{\infty}_{i=k}A_{i})=0.
\]

For each $\omega\in(\limsup A_{k})^{c}$, there exists an $N(\omega)$ such that $|\xi_{n_{k+1}}(\omega)-\xi_{n_{k}}(\omega)|\leq \frac{1}{2^{k}}$ for any $k\geq N(\omega)$. We have $\sum_{k=1}^{\infty}|\xi_{n_{k}+1}(\omega)-\xi_{n_{k}}(\omega)|<\infty$. Then $\{\xi_{n_{k}}(\omega)\}_{k\geq 1}$ converges.
\end{proof}
\begin{theorem}\label{Chauchy-convergence}
If the sublinear expectation $\mathcal{E}$ is regular, $\{\xi_{n}\}_{n\geq 1}$ converges to $\xi$ in capacity if and only if it is a Cauchy sequence in capacity.
\end{theorem}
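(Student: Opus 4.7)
The $(\Rightarrow)$ direction is the routine triangle-inequality argument. From the inclusion $\{|\xi_n-\xi_m|>\epsilon\}\subset\{|\xi_n-\xi|>\epsilon/2\}\cup\{|\xi_m-\xi|>\epsilon/2\}$ and the subadditivity of $C$ (which follows from $C(A)=\mathcal{E}(I_A)$ together with the subadditivity of $\mathcal{E}$), one obtains
\[
C(|\xi_n-\xi_m|>\epsilon)\le C(|\xi_n-\xi|>\epsilon/2)+C(|\xi_m-\xi|>\epsilon/2),
\]
and each term is less than $\delta/2$ as soon as $n,m$ are large, by the assumed convergence in capacity.

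For the converse $(\Leftarrow)$, the strategy has three steps. First, the Cauchy property lets me extract, via Theorem \ref{lem-subsequence}, a subsequence $\{\xi_{n_k}\}_{k\ge 1}$ converging $C$-q.s. to some $\xi$. Second, I upgrade this $C$-q.s. convergence of the subsequence to convergence in capacity. Fix $\epsilon>0$ and set
\[
B_m:=\bigcup_{k\ge m}\{|\xi_{n_k}-\xi|>\epsilon\}.
\]
Then $B_m\downarrow\bigcap_{m}B_m=\limsup_k\{|\xi_{n_k}-\xi|>\epsilon\}$, and this limsup set is contained in the $C$-polar set outside of which $\xi_{n_k}\to\xi$ pointwise, hence has $C$-measure zero. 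Regularity of $\mathcal{E}$, via the $A_n\downarrow A$ case of Proposition \ref{capacity-regular}, then yields $C(B_m)\downarrow 0$, and since $\{|\xi_{n_m}-\xi|>\epsilon\}\subset B_m$, we get $C(|\xi_{n_m}-\xi|>\epsilon)\to 0$. Third, I combine the two: given $\epsilon,\delta>0$, choose $N$ large using the Cauchy property so that $C(|\xi_n-\xi_m|>\epsilon/2)<\delta/2$ for all $n,m\ge N$, then pick a $k$ with $n_k\ge N$ and $C(|\xi_{n_k}-\xi|>\epsilon/2)<\delta/2$. The triangle-inequality/subadditivity argument from the forward direction then gives $C(|\xi_n-\xi|>\epsilon)<\delta$ for every $n\ge N$.

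The only non-routine step is the upgrade from $C$-q.s. convergence of the subsequence to convergence in capacity, and this is precisely where regularity of $\mathcal{E}$ enters, through the downward continuity of $C$ supplied by Proposition \ref{capacity-regular}. Without regularity, one could only conclude $C(\bigcap_m B_m)=0$, not $C(B_m)\downarrow 0$, and the whole argument collapses; everything else is the standard probability-theoretic bookkeeping adapted to the sublinear setting.
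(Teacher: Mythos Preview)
Your proof is correct and follows essentially the same route as the paper: the forward direction is the same triangle-inequality/subadditivity estimate, and for the converse both you and the paper invoke Theorem~\ref{lem-subsequence} to get a $C$-q.s.\ convergent subsequence, upgrade this to convergence in capacity via the downward continuity of $C$ on the sets $B_m=\bigcup_{k\ge m}\{|\xi_{n_k}-\xi|>\epsilon\}$ (the paper phrases this as ``the same method as in Theorem~\ref{converge q.s.}''), and finish with another triangle-inequality split using the Cauchy hypothesis.
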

\begin{proof}
$\Rightarrow$ The result comes from for any $\epsilon>0$,
\[
C(|\xi_{n}-\xi_{m}|>\epsilon)\leq C(|\xi_{n}-\xi|>\frac{\epsilon}{2})+C(|\xi_{m}-\xi|>\frac{\epsilon}{2}).
\]

$\Leftarrow$ By Theorem \ref{lem-subsequence}, choose a subsequence $\{\xi_{n_{k}}\}_{k\geq 1}$ such that it converges to some $\xi$, $C$-q.s.. By using the same method as in Theorem \ref{converge q.s.}, we can get $C(\bigcap_{i=1}\bigcup_{k\geq i}|\xi_{n_{k}}-\xi|>\epsilon)=0$ for any $\epsilon>0$. Then $\lim_{i\rightarrow\infty} C(\bigcup_{k\geq i}|\xi_{n_{k}}-\xi|>\epsilon)=0$ and $\{\xi_{n_{k}}\}_{k\geq 1}$ converges to $\xi$ in capacity. Then the result comes from for any $\epsilon>0$,
\[
C(|\xi_{n}-\xi|>\epsilon)\leq C(|\xi_{n}-\xi_{n_{k}}|>\frac{\epsilon}{2})+C(|\xi_{n_{k}}-\xi|>\frac{\epsilon}{2}).
\]
\end{proof}

\end{document}